\newcommand*{\doi}[1]{\href{http://dx.doi.org/\detokenize{#1}}{doi}}
\setlist{topsep=0em, partopsep=0pt, itemsep=0pt}
\newtheorem{theorem}[equation]{Theorem}
\newtheorem{proposition}[equation]{Proposition}
\newtheorem{lemma}[equation]{Lemma}
\theoremstyle{remark}
\newcommand{\1}{\mathds{1}}
\newcommand{\E}{\mathbb{E}}
\newcommand{\bE}{\mathbb{E}}
\newcommand{\bP}{\mathbb{P}}
\newcommand{\F}{\mathscr{F}}
\newcommand{\N}{\mathbb{N}}
\newcommand{\Pb}{\mathbb{P}}
\newcommand{\Z}{\mathbb{Z}}
\newcommand{\I}{\mathds{1}}
\newcommand{\0}{{\boldsymbol{0}}}
\newcommand{\NN}{\mathcal{N}}
\newcommand{\dd}{{\mathrm d}}
\newcommand{\tm}{\boldsymbol{m}}
\newcommand{\T}{\mathcal{T}}
\newcommand{\sleep}{\mathfrak{s}}
\newcommand{\toppling}{\mathfrak{t}}
\renewcommand{\leq}{\leqslant}
\renewcommand{\le}{\leqslant}
\renewcommand{\geq}{\geqslant}
\renewcommand{\ge}{\geqslant}
\renewcommand{\epsilon}{\varepsilon}
\begin{document}

\title{Diffusive bounds for the critical \\ density of activated random walks}

\author{Amine Asselah
\and
Leonardo T. Rolla
\and
Bruno Schapira
}
\maketitle

\begin{abstract}
We consider symmetric activated random walks on $\mathbb{Z}$, and show that the critical density $\zeta_c$ satisfies $c\sqrt{\lambda} \leq \zeta_c(\lambda) \leq C \sqrt{\lambda}$ where $\lambda$ denotes the sleep rate.
\end{abstract}

\section{Introduction}
\label{sec-intro}

The Activated Random Walk model is a system of interacting random walks that we consider on the graph $\Z$.
Each walk performs a continuous-time simple symmetric random walk, and falls asleep at an exponential time of parameter $\lambda$.
When it falls asleep, the walk stays still.
When not sleeping, we call it active.
When an active walk meets a sleeping walk, the latter is reactivated and resumes its movement.

An important property of this model is that it has an \emph{absorbing-state phase transition}.
With an initial density of walks below a critical value, $\zeta_c(\lambda)$, the system fixates, that is all walks eventually sleep.
Above $\zeta_c$, the system stays active, that is, each walk is reactivated
infinitely many times.

This model was popularized in~\cite{DickmanRollaSidoravicius10}, and several non-trivial bounds for $\zeta_c=\zeta_c(\lambda)$ on the graph $\Z^d$ were proved in the past few years.
For $d=1$, it was proved in~\cite{RollaSidoravicius12} that $\zeta_c>0$ for all $\lambda$ and $\zeta_c \to 1$ as $\lambda\to\infty$.
For $d\ge 2$ and $\lambda=\infty$, it was also shown in~\cite{Shellef10} that $\zeta_c>0$ and in~\cite{CabezasRollaSidoravicius14,CabezasRollaSidoravicius18} that $\zeta_c \ge 1$.
For $d \ge 2$ and $\lambda>0$ it was shown in~\cite{SidoraviciusTeixeira17} that $\zeta_c>0$, assuming short-range unbiased jump distributions.
This was extended to general jump distributions in~\cite{StaufferTaggi18}, where it was also shown that $\zeta_c \to 1$ as $\lambda\to\infty$.
It was proved in~\cite{AmirGurel-Gurevich10,Shellef10} that $\zeta_c \le 1$ in any dimension for any $\lambda$.
For biased jump distributions, it was shown in~\cite{Taggi16} that, on $d=1$, $\zeta_c<1$ for every $\lambda$ and $\zeta_c\to 0$ as $\lambda \to 0$, and on $d \ge 2$ that $\zeta_c<1$ for small $\lambda$.
The picture on $d \geq 2$ was extended in~\cite{RollaTournier18} by showing that $\zeta_c<1$ for every $\lambda$ and $\zeta_c \to 0$ as $\lambda\to 0$.
For unbiased jumps, it was shown that $\zeta_c\to 0$ as $\lambda \to 0$, in~\cite{BasuGangulyHoffman18} for $d=1$ and~\cite{StaufferTaggi18} for $d \ge 3$, and finally~\cite{Taggi19} that $\zeta_c<1$ for every $\lambda<\infty$ and $d \ge 3$.
Proving that $\zeta_c < 1$ for some $\lambda$ in $d=2$ is still open.
In~\cite{RollaSidoraviciusZindy19} it was shown that this phase transition is determined by the density alone and is independent of particular details of the initial state.
Existence of slow stabilization phase and a fast stabilization phase for a conservative finite-volume dynamics was studied in~\cite{BasuGangulyHoffmanRichey19}.

Our main result gives a diffusive upper bound for $\zeta_c(\lambda)$ when $d=1$, improving the recent result by Basu, Ganguly and Hoffman~\cite{BasuGangulyHoffman18}.

\begin{theorem}\label{thm:main}
There are positive constants $c$ and $C$, such that for all $\lambda>0$,
\begin{equation}
\label{main-estimate}
c\sqrt{\lambda} \leq \zeta_c(\lambda) \leq C \sqrt{\lambda}.
\end{equation}
\end{theorem}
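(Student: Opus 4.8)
The plan is to first reduce to a fixed initial law: by \cite{RollaSidoraviciusZindy19} the value of $\zeta_c(\lambda)$ does not depend on the choice of translation–ergodic initial configuration, so it suffices to work with i.i.d.\ Poisson($\zeta$) particles and prove (i) the system does not fixate when $\zeta > C\sqrt{\lambda}$, and (ii) it fixates when $\zeta < c\sqrt{\lambda}$. Both thresholds come from one heuristic: the typical gap between particles is of order $1/\zeta$, the diffusive time to cross such a gap is of order $1/\zeta^2$, and a freely running walk falls asleep on timescale $1/\lambda$; activity survives precisely when a walk can reach a fresh particle before sleeping, i.e.\ when $1/\zeta^2 \lesssim 1/\lambda$, that is $\zeta \gtrsim \sqrt{\lambda}$. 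Throughout I would use the abelian property and the instruction‑stack (Diaconis–Fulton) representation, which let one choose a convenient order of legal topplings and track the odometer.

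For the upper bound (i), I would compare with a branching process. Process the particles through a queue: activate one particle, let it move by reading instructions from the stacks while every other sleeping particle stays put, and whenever it lands on a site still carrying a sleeping particle, append that particle to the queue and keep it active. The running particle stops only upon reading a ``sleep'' instruction while \emph{alone} on its site; since it is never alone on an occupied site, it runs for at least a geometric number of steps with parameter of order $\lambda$, hence explores a range of order $1/\sqrt{\lambda}$ distinct sites, each independently carrying a sleeping particle with probability of order $\zeta$. Thus each processed particle spawns, in expectation, of order $\zeta/\sqrt{\lambda}$ offspring, which dominates a supercritical Galton–Watson process once $\zeta/\sqrt{\lambda}$ is a large enough constant. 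Stabilizing the box $[-n,n]$, this makes the expected odometer at the origin diverge as $n\to\infty$ (using translation invariance to average the total number of topplings over the box), which is the standard criterion for non‑fixation (cf.\ \cite{RollaSidoravicius12,AmirGurel-Gurevich10}).

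For the lower bound (ii), I would run a renormalization argument. Partition $\Z$ into blocks of length $L$ of order $1/\zeta$ and call a block \emph{good} if, stabilized in isolation, its (few) particles settle into a \emph{robust} configuration: any single active particle entering the block from outside falls asleep before crossing it and triggers only a bounded, absorbed perturbation. Robustness holds with probability close to $1$ when $\lambda L^2$ is large, i.e.\ when $\zeta \ll \sqrt{\lambda}$, since crossing a block of length $L$ takes diffusive time of order $L^2 \sim 1/\zeta^2$ while the sleep clock fires on timescale $1/\lambda$. One then processes the finitely many ``excess'' particles of a large region one at a time (abelian property) and bounds the displacement caused by adding one particle by a quantity with exponential tails in the number of blocks it crosses, so the cascade of perturbations is a.s.\ eventually stopped by a good block; a Borel–Cantelli argument over the good blocks, which form a stationary sequence of density close to $1$, then gives global fixation.

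The main obstacle is in the upper bound: the offspring of the branching comparison are \emph{not} independent — a sleeping particle can be reached by two different running particles, and successive exploration ranges overlap and reuse instructions already consulted. I expect the core of the proof to be a careful bookkeeping (thinning, restarting the queue on fresh regions, crediting only first‑time wakings) that still yields a genuine supercritical lower bound at the cost of worsening the constant $C$, together with the passage from ``supercritical branching in a box'' to divergence of the expected odometer; the diffusive scaling itself is robust, since it only reflects that a walk run for time $1/\lambda$ visits $\sim 1/\sqrt{\lambda}$ sites. In the lower bound the analogous delicate point is the stability of a good block under infinitely many hits over infinite time, which the monotone (abelian) processing and the exponential range‑of‑influence estimate are designed to control.
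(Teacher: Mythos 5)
Your diffusive heuristic ($1/\zeta^2$ crossing time versus $1/\lambda$ sleep time) is the right intuition, but both halves of your argument have genuine gaps, and the more serious one is in the upper bound. Your branching comparison needs each freshly activated particle to explore $\sim 1/\sqrt{\lambda}$ \emph{new} sites whose occupation is independent of the past; in $d=1$ this fails badly, because the walk is recurrent, successive exploration ranges overlap almost entirely, instructions on revisited sites have already been consulted, and the configuration left behind in the explored region is strongly correlated with the exploration itself. You correctly flag this as ``the main obstacle,'' but the fix is not bookkeeping on top of the branching picture: no thinning or restarting scheme is known to salvage it, and the paper abandons the particle-wise viewpoint entirely. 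It partitions $V_r$ into blocks of width $K\asymp 1/\sqrt{\lambda}$ with independent instruction stacks, encodes the interaction between blocks by coarse-grained odometers $L_i,R_i,S_i$ tied together by mass balance equations, and sums over all tuples compatible with those equations; everything then reduces to the single-block estimate of Proposition~\ref{prop:oneblock}, proved by showing that $F(\ell)=2S(\tau_\ell)+L(\tau_\ell)-\ell$, observed at the times $\tau_\ell$ when a particle first exits the block on the left, has negative drift and hence uniformly bounded exponential moments. This yields $\E^{\eta_0}_{V_r}[e^{\alpha S(V_r)}]\le e^{\beta\sqrt{\lambda}\,r}$, so a positive fraction of particles exits $V_r$ when $\zeta>C\sqrt{\lambda}$, which is the activity criterion actually invoked (divergence of the expected odometer alone is not the criterion used).

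For the lower bound your renormalization sketch also leaves the key step unproved: a ``robust'' block that absorbs any incoming particle with only a ``bounded, absorbed perturbation'' is exactly the hard object to construct, since one incoming particle wakes the block's sleepers, who wake others, and the cascade can cross the block; your ``exponential range-of-influence estimate'' is asserted, not derived. The paper sidesteps all of this by quoting the quantitative form of the Rolla--Sidoravicius trap argument, $\zeta_c(\lambda)\ge 1/\E Z_{\NN}$, where $Z_n$ is the running maximum of the $h$-transform of simple random walk and $\NN$ is geometric of parameter $\lambda/(1+\lambda)$; the only new input is the elementary bound $\E Z_n\le C\sqrt{n}$ (via the Cyclic Lemma and a martingale argument), which gives $\E Z_{\NN}\lesssim \lambda^{-1/2}$ and hence $\zeta_c\ge c\sqrt{\lambda}$. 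So while your scaling picture matches the paper's, neither of your two arguments closes; in each case the step you defer (``careful bookkeeping,'' ``exponential range of influence'') is precisely where the mathematical content lies.
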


The lower bound follows from the procedure introduced in~\cite{RollaSidoravicius12}, see~§\ref{sec:lower}.

As in~\cite{BasuGangulyHoffman18}, in order to get the upper bound in~\eqref{main-estimate} we prove a quantitative estimate for the \emph{finite-volume dynamics}, defined as follows.
For a finite interval $V\subseteq \Z$, consider
a dynamics such that walks are lost forever when they escape $V$.
This process eventually stabilizes,
when all walks left in $V$ are sleeping, and
its law is denoted $\bP^{\eta_0}_V$ when
the initial configuration is $\eta_0$.

To prove activity the following condition is enough.
For a finite domain $V$, we call $S(V)$ the number of sleeping walks in $V$ after stabilization of $V$ in the finite-domain dynamics.
For $r\ge 1$, we let $V_r:=\{-r,\dots,r\}$.

\begin{theorem}\label{thm:quantitative}
There exist positive constants $\alpha$ and $\beta$,
such that for any configuration $\eta_0$, any $\lambda>0$ and any $r\ge 1$,
\begin{equation}
\label{eq:expmoment}
\bE^{\eta_0}_{V_r} [ e^{\alpha S(V_r)} ] \le e^{\beta \sqrt \lambda\cdot r}.
\end{equation}
\end{theorem}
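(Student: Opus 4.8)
The plan is to control $S(V_r)$ by a one-dimensional combinatorial argument on the \emph{odometer}, exploiting that in $d=1$ the finite-volume stabilization can be organized so that walks move monotonically in a controlled way. First I would recall the abelian property: the final configuration, and in particular $S(V_r)$, does not depend on the order of legal moves, so we are free to choose a convenient toppling schedule. The schedule I would use sends active walks preferentially toward the boundary, so that the set of sites that ever host a sleeping walk at the end is organized into a bounded number of ``clusters'' separated by sites through which many walks have passed. The key heuristic is that a sleeping walk can only survive at a site $x$ if, at the moment the last walk visits a neighborhood of $x$, that walk falls asleep before it can be pushed out; on a path of length $\ell$ that a walk traverses, this happens with probability roughly $(1+\lambda)^{-\ell}$ for the walk to cross, i.e. a geometric-type cost, and crossing a stretch of $\sqrt\lambda^{-1}$ sites is order-one likely while crossing much longer stretches is exponentially unlikely. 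This is exactly the scale $\sqrt\lambda$ that appears in the exponent.

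The key steps, in order, would be: (1) Fix the boundary-directed toppling schedule and encode the dynamics on $V_r$ by a family of independent random walk trajectories with i.i.d.\ geometric sleep attempts, so that the event ``a walk sleeps at $x$'' becomes a concrete event about these trajectories. (2) Prove a single-site (or single-block) estimate: for a block $B$ of length of order $\lambda^{-1/2}$, the probability that $B$ contains a sleeping walk at stabilization, conditioned on everything to one side, is bounded away from $1$ by a universal constant — this is where the diffusive scale enters, via the fact that a random walk needs about $\lambda^{-1}$ steps (hence crosses about $\lambda^{-1/2}$ sites) before its first sleep attempt. (3) Chain these block estimates along $V_r$: partition $V_r$ into $O(\sqrt\lambda\, r)$ blocks of length $\lambda^{-1/2}$, and show that $S(V_r)$ is stochastically dominated by a sum of $O(\sqrt\lambda\, r)$ increments each of which has a uniformly bounded exponential moment, using a martingale/Markov-type argument that reveals the blocks one at a time from, say, the left boundary inward. (4) Multiply the per-block exponential moments to obtain $\bE^{\eta_0}_{V_r}[e^{\alpha S(V_r)}] \le (\text{const})^{O(\sqrt\lambda\, r)} = e^{\beta\sqrt\lambda\, r}$ for suitable $\alpha$ small enough that the per-block exponential moment is finite, and $\beta$ its logarithm.

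I expect the main obstacle to be step (2) together with the conditioning in step (3): the subtle point is that ``sleeping walks in $B$ at stabilization'' depends on the whole history, including walks that will later re-enter $B$ from the right and reactivate what was sleeping there, so the event is not naturally measurable with respect to a clean left-to-right filtration. Handling this requires choosing the exploration so that when we reveal block $B_k$ we have already committed to the trajectories that can influence $B_k$ from the left, and arguing that influences from the right only \emph{decrease} $S$ restricted to $B_k$ (reactivation can only remove sleeping walks, and any walk that then sleeps further left has already been accounted for when we reach that block) — i.e.\ a monotonicity/absorbing-property argument showing the right-to-left influence is of the correct sign. A secondary technical point is to make precise the reduction in step (1): the boundary-directed schedule must be defined carefully enough that the relevant crossing events really do have the geometric structure claimed, and one must check that walks lost at the boundary do not spoil the block decomposition. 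Once the correct exploration and the sign of the cross-block interactions are pinned down, steps (3)–(4) should be a routine exponential-moment bookkeeping.
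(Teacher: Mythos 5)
Your overall architecture (blocks of diffusive length $\lambda^{-1/2}$, a per-block exponential moment, multiply over $O(\sqrt\lambda\,r)$ blocks) matches the paper, and your heuristic for why $\lambda^{-1/2}$ is the right scale is exactly the one used there. But the two steps you flag as delicate are precisely where the proposal breaks, and the fixes you propose do not work. First, the monotonicity you invoke in step (3) is false: the influence of the right side on the sleeping count in a block $B_k$ does \emph{not} have a definite sign. Walks entering $B_k$ from the right can fall asleep inside $B_k$ and increase $S$ there, and the number of such incursions depends on the entire unrevealed right portion; conversely reactivations decrease it. There is no left-to-right filtration with respect to which the per-block contributions are adapted with controlled conditional exponential moments. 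The paper's way around this is not monotonicity but \emph{decoupling plus a union bound over the interaction}: each non-source site is given two independent instruction stacks (one per block containing it), cross-block reactivation is suppressed (which only increases $S$, hence gives an upper bound by Abelianness), and the only remaining interaction between blocks is the integer vector $\tm=(m_1,\dots,m_n)$ of arrivals at the block centers, constrained by mass balance equations $L_i(m_i)=m_{i-1}-R_{i-2}(m_{i-2})$. One then sums over \emph{all} tuples compatible with these equations rather than trying to identify the true one.

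Second, and relatedly, your single-block estimate in step (2) is far too weak. Because the tuple $\tm$ is summed over, the estimate actually needed is
\[
\sup_\xi\sup_\ell \sum_{m\ge 0}\E\big[e^{\alpha S(m)}\,\1_{L(m)=\ell}\big]\le M,
\]
i.e.\ a bound on the exponential moment of the sleeping count that is \emph{summed over the unbounded number $m$ of walks injected at the source} and uniform in the prescribed left-exit count $\ell$. A bound of the form ``the block contains a sleeping walk with probability bounded away from $1$'' does not control this: a block of length $K\sim\lambda^{-1/2}$ can hold order $K$ sleepers, and the number of walks passing through a block is not a priori bounded. The paper proves the displayed estimate by observing $S$ at the successive times $\tau_\ell$ at which the left-exit count increases, and showing that a Lyapunov function $2S(\tau_\ell)+L(\tau_\ell)-\ell$ has negative drift (each new walk exits left with probability about $1/4$ and wakes-and-expels a sleeper with comparable probability, while it falls asleep with probability only $O(\epsilon)$ thanks to the choice of $K$). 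That drift argument, together with the tuple summation, is the substantive content of the proof and is absent from your proposal.
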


Theorem~\ref{thm:quantitative} implies that
there are positive constants $C$ and $c$, such that for any $r$ integer, and
any initial configuration $\eta_0$,
\begin{equation}\label{eq:conditionu}
\bP^{\eta_0}_{V_r}\big(S(V_r)\ge 2 C \sqrt{\lambda} r\big) \le e^{-c r}.
\end{equation}
As discussed in §\ref{sec:definitions} below, this in turn implies
that every $\zeta > C \sqrt{\lambda}$ is in the active phase of the ARW model,
which gives the upper bound of Theorem~\ref{thm:main} with the same constant $C$.

Another consequence of Theorem~\ref{thm:quantitative} is that every $\zeta > C \sqrt{\lambda}$ is in the ``metastable'' phase for the fixed-energy version of the ARW, in the following sense.
Consider the ARW dynamics on the ring $\Z_n = \Z / n \Z$, with initial condition i.i.d.\ Poisson of mean $\zeta > C \sqrt{\lambda}$.
Let $\mathcal{T}$ denote the total activity in the system, measured by adding the total time each site is occupied by active walks,
counted with multiplicity.

\begin{theorem}
\label{thm:slow}
For some $c$ depending on $\lambda$ and $\zeta>C \sqrt{\lambda}$, for all large $n$,
\begin{equation}
\bP_{\Z_n}\big( \T \ge e^{cn} \big) \ge 1 - e^{-c n}.
\end{equation}
\end{theorem}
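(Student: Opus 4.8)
\emph{Reductions.} If $\zeta>1$, then by a Poisson large deviation the initial number of walks exceeds $n$ with probability $\ge1-e^{-cn}$, and a configuration on $\Z_n$ with more than $n$ walks admits no stable state; on that event $\T=\infty$, so we may assume $\zeta\le1$. Then the conserved total $N:=\sum_x\eta_0(x)$ satisfies $N\ge\zeta_- n$ with probability $\ge1-e^{-cn}$ for any fixed $\zeta_-<\zeta$. Finally, since the number $A_t$ of active walks is $\ge1$ at every time strictly before the fixation time $\tau$, we have $\T=\int_0^\tau A_t\,\dd t\ge\tau$, so it suffices to prove $\bP_{\Z_n}(\tau\ge e^{cn})\ge1-e^{-cn}$.

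\emph{The density gap.} Theorem~\ref{thm:quantitative} enters through the following: after reorganising the constants in \eqref{eq:conditionu} and using $|V_r|\sim2r$, there are $\zeta^\ast\in(C\sqrt\lambda,\zeta)$ and $c_1>0$ such that the finite-volume dynamics in any interval of length $\ell$, started from any configuration supported there, retains at most $\zeta^\ast\ell$ sleeping walks with probability $\ge1-e^{-c_1\ell}$. Fix once and for all $\zeta_-\in\big(\tfrac12(\zeta+\zeta^\ast),\zeta\big)$ for use in the reduction above. Using this gap, the plan is to show that \emph{fixation is a nucleation event}: there are a constant $\Delta=\Delta(\lambda,\zeta)$ and a $c_2>0$ such that, with $s_i:=i\Delta$, conditionally on the system being active at $s_i$ the probability of fixating during $(s_i,s_{i+1}]$ is at most $e^{-c_2 n}$. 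Granting this, a union bound over the at most $e^{cn}/\Delta$ relevant indices gives $\bP(\tau\le e^{cn})\le\Delta^{-1}e^{cn}e^{-c_2 n}\le e^{-cn}$ as soon as $c<c_2$, which is the theorem.

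\emph{The nucleation bound.} One works with the instruction-stack (abelian) representation, so that the instructions unused at the stopping time $s_i$ are fresh and i.i.d. Because each active walk executes instructions at rate $1+\lambda$ while $A_t\le N$, the number of topplings performed during $(s_i,s_{i+1}]$ is dominated by a Poisson variable of mean $(1+\lambda)N\Delta$; choosing $\Delta$ small this number — hence also the total number of boundary crossings, for any partition of $\Z_n$ — is below $\tfrac14(\zeta-\zeta^\ast)n$ with probability $\ge1-e^{-cn}$. Partition $\Z_n$ into $K$ blocks of length $n/K$, $K$ a large constant with $n/K$ past the threshold of the density gap. If the system fixates during $(s_i,s_{i+1}]$, its terminal configuration, carrying about $\zeta n$ sleeping walks, is attained, and its restriction to a given block arises from a finite-volume dynamics in that block modified only by the walks that entered through its two endpoints; by the density gap (applied with the block's fresh stacks) together with the fact that adding one active walk changes the retained count by at most one, each block should retain at most $\zeta^\ast\cdot\frac nK$ sleeping walks plus the number of walks that entered it — unless the underlying finite-volume dynamics is atypical, an event of probability $\le e^{-c_1 n/K}$. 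If no block is atypical, summing over the $K$ blocks gives $N\le\zeta^\ast n+\tfrac14(\zeta-\zeta^\ast)n<\tfrac12(\zeta+\zeta^\ast)n$, contradicting $N\ge\zeta_- n$; hence some block is atypical, and a union bound over the $K$ blocks gives the nucleation bound with $c_2=c_1/(2K)$.

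\emph{Main obstacle.} The delicate step is the comparison just used: Theorem~\ref{thm:quantitative} controls the finite-volume dynamics with \emph{no} incoming walks, whereas a block of the ring does receive walks at its endpoints during the terminal stage of stabilisation. One therefore needs a strengthening of the exponential-moment estimate \eqref{eq:expmoment} that survives a bounded amount of adversarial injection at the endpoints of the interval, together with a bookkeeping of the injection process — how many walks enter, in what order relative to the block's own topplings, and how they split among the $K$ blocks — precise enough that the resulting union bound still costs only $e^{o(n)}$. This is the crux; the nucleation scheme above should then be routine.
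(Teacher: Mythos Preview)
The paper gives no self-contained argument here; it simply cites~\cite[§4]{BasuGangulyHoffmanRichey19} and~\cite[§6.1]{Rolla19}. The route taken there is quite different from yours, and your scheme has a gap more basic than the ``main obstacle'' you flag. Your step ``each block should retain at most $\zeta^\ast\cdot n/K$ sleeping walks plus the number of walks that entered it'' rests on applying the density-gap estimate~\eqref{eq:conditionu} to the configuration of a block \emph{at time $s_i$}, using the fresh stacks. But Theorem~\ref{thm:quantitative} controls stabilisation of an interval started from a configuration of \emph{active} walks; it is plainly false for configurations that already carry sleepers (take $\eta_0\equiv\sleep$, so that $S(V_r)=|V_r|$ deterministically). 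At time $s_i$ a given block may already hold $\approx\zeta n/K>\zeta^\ast n/K$ walks, all asleep, with no active walk inside and no entrant during $(s_i,s_{i+1}]$; the fresh-stack stabilisation of that block is then vacuous and your inequality fails outright. Shortening the time window limits boundary crossings but does nothing to restore the hypothesis of Theorem~\ref{thm:quantitative}: the starting state of each block is partially frozen, and the estimate does not apply to it.

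The argument in the references avoids this by never conditioning on an intermediate continuous-time state. One fixes a single site $x_0$ and runs the abelian procedure: stabilise $I:=\Z_n\setminus\{x_0\}$, topple $x_0$ once, re-stabilise $I$, and iterate. After $k$ topplings of $x_0$, abelianness identifies the configuration on $I$ with the stabilisation of $\eta_0|_I$ together with the at most $k$ walks that $x_0$ has emitted into $I$ --- an \emph{entirely active} input to which Theorem~\ref{thm:quantitative} applies uniformly in $k$, giving $S\le\zeta^\ast n$ except with probability $\le e^{-c_1 n}$. Since fixation of the ring with odometer $k$ at $x_0$ forces $S\ge N-1>\zeta^\ast n$ on $\{N\ge\zeta_- n\}$, a union bound over $k\le e^{c_1 n/2}$ shows the odometer at $x_0$ exceeds $e^{c_1 n/2}$ with probability $\ge 1-e^{-c' n}$, and $\T$ is comparable to the total odometer. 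The endpoint-injection issue you raise does arise here too, but it is handled automatically because the emitted walks are folded into the \emph{initial} active configuration rather than injected into a partially stabilised one.
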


Theorem~\ref{thm:slow} follows from Theorem~\ref{thm:quantitative}, see~\cite[§4]{BasuGangulyHoffmanRichey19} or~\cite[§6.1]{Rolla19}.

The proof of Theorem~\ref{thm:quantitative} follows a general framework introduced in~\cite{BasuGangulyHoffman18}. 
The key idea is to decompose space into blocks with independent instructions, so that they interact only through the number of walks arriving at the center of each block after having been ``emitted'' from a neighboring box.
This interaction is described by ``coarse-grained odometers,'' one for each block.
These are complicated random functions which are entangled by simple mass balance equations.
Instead of trying to say what the tuple of coarse-grained odometers are, one gets upper bounds by summing over all tuples compatible with the mass balance equations.
This approach ends up reducing the main bound~\eqref{eq:conditionu} to a single-block 
estimate (Proposition~\ref{prop:oneblock} below).
For the reader's convenience we reproduce this framework in §\ref{sec:framework}, following the description given in~\cite[§5.1]{Rolla19}.

The main contribution of this paper is to extend the single-block estimate of~\cite{BasuGangulyHoffman18}, which was proved for very small $\lambda$, to an estimate valid for all $\lambda < C^{-2} \, \zeta^2$.
This is done in §\ref{sec.single}.
In the proof we consider th single-block dynamics indexed by the jump times of the variable which counts the number of walks exiting from the left.
We show that the number of sleeping walks seen at these jump times has a drift downwards, and therefore has finite exponential moments.

Finally, in §\ref{sec:lower} we briefly show how the lower bound in Theorem~\ref{thm:main} follows from the proof of fixation given in~\cite{RollaSidoravicius12} and diffusive estimates for the $h$-transform of the simple random walk.

\section{Definitions and main tools}
\label{sec:definitions}

In this section we define more precisely the stochastic process to be studied, 
describe the site-wise representation, the Abelian property, and why~\eqref{eq:conditionu} implies the upper bound in Theorem~\ref{thm:main}.
We refer to the recent survey~\cite{Rolla19} for a more complete presentation.

\subsubsection*{The Abelian property}

A seemingly natural way to construct a collection of random walks is to sample sequences of instructions (going right, going left or sleeping) and attach them to the marks of each walk's clock.
But for a class of models which includes the ARW, it is convenient to attach the instructions to the sites of the graph instead.
In this setting, each walk is assigned a Poisson clock which determines \emph{when} the walk is going to perform an action, but \emph{the action itself} is determined by a stack of instructions \emph{assigned to the site} where the walk is.

These two ways of realizing the process are equivalent if the walks are seen as indistinguishable.
The latter construction provides a convenient coupling of the finite-volume dynamics on every $V \subseteq \Z$.
This coupling is very useful because of the celebrated \emph{Abelian property}.
Some aspects of the evolution, such as the final configuration and the number of visits to a site, are determined by the initial configuration and the stacks of instructions assigned to the sites, and do not depend on the Poisson clocks.

\subsubsection*{Formal definitions}

Let $\N = \{0,1,2,\dots\}$ and $\N_{\sleep} = \N \cup \{\sleep\}$, where $\sleep$ represents a sleeping walk.
For convenience we define $|\sleep|=1$, and $|n|= n$ for $n\in \N$, and write $0<\sleep<1<2<\cdots$.
Also define $\sleep+1=2$ and $n \cdot \sleep = n$ for $n \geq 2$ and $\sleep$ if $n=1$.

The state of the ARW at time $t\geqslant 0$ is given by $\eta_t \in (\N_{\sleep})^{\Z^d}$, and the process evolves as follows.
For each site $x$, a Poisson clock rings at rate $(1+\lambda) \, |\eta_t(x)| \, \I_{\eta_t(x) \ne \sleep}$.
When this clock rings, the system goes through the transition $\eta\to \toppling_{x\sleep}\eta$ with probability $\frac{\lambda}{1+\lambda}$, otherwise $\eta\to \toppling_{xy}\eta$ with probability $\frac{1}{2} \times \frac{1}{1+\lambda}$ for $y = x \pm 1$.
These transitions are given by
\[
 \toppling_{xy}\eta(z) =
 \begin{cases}
 \eta(x)-1, & z=x, \\
 \eta(y)+1, & z=y, \\
 \eta(z), & \mbox{otherwise,}
 \end{cases}
\qquad
 \toppling_{x\sleep}\eta(z) =
 \begin{cases}
 \eta(x) \cdot \sleep, & z=x, \\
 \eta(z), & \mbox{otherwise}
 \end{cases}
\]
and only occur if $\eta(x) \geq 1$.
The operator $\toppling_{x\sleep}$ represents a walk at $x$ trying to fall asleep, which effectively happen if there are no other walks present at $x$.
Otherwise, by definition of $n \cdot \sleep$,
the system state does not change.
The operator $\toppling_{xy}$ represents a walk jumping from $x$ to $y$, where possible activation of a sleeping walk previously found at $y$ is represented by the convention that $\sleep+1=2$.

Given a translation-invariant and ergodic distribution $\nu$ on $(\N_\sleep)^{\Z^d}$, let $\rho(\nu) = \int|\eta(\0)|\nu(\dd \eta)<\infty$ denote its average density.
If $\rho(\nu)<\infty$, there exists a process $(\eta_t)_{t\geqslant0}$ with transition rates described above and such that $\eta_0$ has law $\nu$.
We use $\bP^\nu$ to denote the underlying probability measure.
We say that the system \emph{fixates} if, for each $x\in \Z$, $\eta_t(x)$ remains constant for all $t$
large enough, otherwise we say that the system \emph{stays active}.

There exists a number $\zeta_c$, which is non-decreasing on $\lambda$, such that the system fixates a.s.\ if $\rho(\nu)<\zeta_c$ and stays active a.s.\ if $\rho(\nu)>\zeta_c$~\cite[Theorem~2.13]{Rolla19}.
Moreover, denoting $M_r$ the number of walks which exit $V_r$ in the finite-domain dynamics, if $\nu$ is a product measure and $\limsup_r \frac{\bE^{\nu}_{V_r} M_r}{r}>0$, then the system stays active a.s.~\cite[Theorem~2.11]{Rolla19}.
In particular, the bound~\eqref{eq:conditionu} implies the upper bound in Theorem~\ref{thm:main}.

We now describe the site-wise construction, which provides the Abelian property used in the proof of Theorem~\ref{thm:quantitative}.

\subsubsection*{Site-wise representation and stabilization}

We now use $\eta$ to denote configurations in $(\N_{\sleep})^{\Z^d}$ instead of a continuous-time process.
We say that site $x$ is \emph{unstable for the configuration $\eta$} if $\eta(x) \geqslant 1$. Otherwise, $x$ is said to be \emph{stable}.
By \emph{toppling} site $x$ we mean the application of an operator $\toppling_{xy}$ or $\toppling_{x\sleep}$ to $\eta$.
Toppling an unstable site is \emph{legal}.

Let $(\toppling^{x,j})_{x\in\Z^d, j\in\N}$ be a fixed field of instructions, that is, for each $x$ and $j$, $\toppling^{x,j}$ equals $\toppling_{x\sleep}$ or $\toppling_{xy}$ for some $y$.
Let $h\in \N^{\Z^d}$.
This field $h$ counts how many topplings occur at each site.
The toppling operation at $x$ is defined by
\(
\Phi_x(\eta,h)= \big(\toppling^{x,h(x)+1}\eta, h + \delta_x \big).
\)
Given a finite sequence $\boldsymbol{a}=(x_1,\dots,x_k)$, define
\(
\Phi_{\boldsymbol{a}} = \Phi_{x_k}\circ \Phi_{x_{k-1}}\circ \cdots \circ \Phi_{x_1}.
\)
We write $\Phi_{\boldsymbol{a}} \eta$ as a short for $\Phi_{\boldsymbol{a}} (\eta,0)$.
Given $V \subseteq \Z^d$, we say that $\eta$ is \emph{stable in $V$} if every $x\in V$ is stable for $\eta$.
We say that ${\boldsymbol{a}}$ is \emph{contained} in $V$ if $x_1,\dots,x_k \in V$.
We say that ${\boldsymbol{a}}$ \emph{stabilizes} $\eta$ in $V$ if $\Phi_{\boldsymbol{a}} \eta$ is stable in $V$.

The \emph{Abelian property} reads as follows.
If ${\boldsymbol{a}}$ and ${\boldsymbol{b}}$ are both legal toppling sequences for $\eta$ that are contained in $V$ and stabilize $\eta$ in $V$, then $\Phi_{\boldsymbol{a}} \eta = \Phi_{\boldsymbol{b}} \eta$.

We construct the measures $\Pb^{\eta_0}_V$ explicitly by taking all the $\toppling^{x,j}$ i.i.d.\ sampled with the distribution described above, plus Poisson clocks.
By the Abelian property, $S(V)$ it is determined by $\eta_0$ and $\Phi_{\boldsymbol{a}} \eta_0$, for \emph{any} sequence ${\boldsymbol{a}}$ of topplings which is contained in $V$ and stabilizes $\eta_0$ in $V$ (so the Poisson clocks will no longer be mentioned).
A convenient choice of ${\boldsymbol{a}}$, which is called a \emph{toppling procedure}, is absolutely central in §\ref{sec:framework} where we sketch the description of a machinery which relates the main result of §\ref{sec.single} to~\eqref{eq:expmoment}.

\section{Single-block estimate}
\label{sec.single}

In this section we state and prove a single-block estimate.
For $0 < \lambda \leq 1$, and $K \in 2\N$, define the domains $V = \{-K, \dots, K\}$ and $U=\{-\frac{K}{2},\dots,\frac{K}{2}\}$.
Let $\xi \in \{0,1\}^{U}$ be a fixed initial configuration supported on $U$.

\begin{figure}[b]
\centering
\includegraphics[width=.9\textwidth]{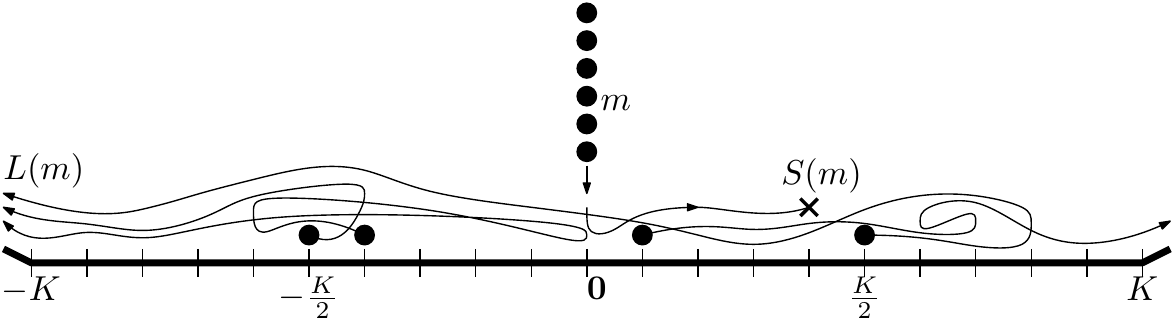}
\caption{Illustration of the dynamics inside a block with $K=10$}
\label{fig:oneblock}
\end{figure}

For $m=0,1,2,\dots$, consider stabilization of the configuration $\xi^m = \xi + m \delta_\0$ obtained by adding $m$ walks at site $x=\0$ to the configuration $\xi$.
Let $L(m)$ and $S(m)$ count how many walks exit $V$ from
the left and how many walks are sleeping in $V$ after $\xi^m$
is stabilized in $V$, see Figure~\ref{fig:oneblock}.
Dependence on $\xi$ and $V$ is omitted in the notation.

Note that $L(\cdot)$ and $S(\cdot)$ are non-decreasing random functions.

\begin{proposition}
\label{prop:oneblock}
For some $\alpha$, $\delta$ and $M$, for all $0<\lambda\leq \delta$,
taking $K = 2\lceil \frac{\delta}{\sqrt{\lambda}} \rceil$,
and $V = \{-K, \dots, K\}$
\begin{equation}
\label{eq:singleblock}
\sup_\xi
\sup_{\ell} \sum_{m \ge 0} \E_V^{\xi^m}
[e^{\alpha S(m)} \1_{L(m) = \ell} ] \le M
.
\end{equation}
\end{proposition}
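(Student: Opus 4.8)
The plan is to follow the strategy announced in the introduction: fix $\xi$ and $\ell$, and study the single-block dynamics indexed by the successive jump times of $L(\cdot)$, i.e.\ the values of $m$ at which a new walk escapes to the left. Write $0 \le m_1 < m_2 < \dots$ for those values, so that $L(m)$ increases by one exactly at each $m_i$, and let $S_i := S(m_i)$ (with $S_0$ the value before the first left-escape). Since $L(\cdot)$ and $S(\cdot)$ are nondecreasing, for each fixed target value $\ell$ the sum $\sum_{m\ge 0}\E_V^{\xi^m}[e^{\alpha S(m)}\1_{L(m)=\ell}]$ is controlled by $\E[\,(m_{\ell+1}-m_\ell)\,e^{\alpha S_\ell}\,]$ plus lower-order boundary terms: on the event $\{L(m)=\ell\}$ we have $m_\ell \le m < m_{\ell+1}$ and $S(m) \le S_{\ell+1}$, so it suffices to get a uniform-in-$\ell$ bound on a quantity like $\E[(m_{\ell+1}-m_\ell)e^{\alpha S_{\ell+1}}]$, and then sum a geometric series in $\ell$.

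The core estimate is therefore a one-step drift inequality for the chain $(S_i)$ together with control of the increments $m_{i+1}-m_i$. The key mechanism: when we add walks one at a time at the origin, most of them must either exit $V$ or get absorbed (fall asleep) inside $U$-ish region; because $V$ has width $\asymp \delta/\sqrt\lambda$, a walk released at the center performs a random walk that, before it can travel distance $K \asymp \delta/\sqrt\lambda$ to either boundary, will attempt to sleep $\asymp K^2\lambda \asymp \delta^2$ times — so with $\delta$ small the walk typically exits without sleeping, and when it does sleep it does so at a essentially uniformly spread location, so each previously sleeping walk is re-awakened with decent probability. Quantitatively I would show there are constants so that, conditionally on the past up to the $i$-th left-escape, $\E[e^{\alpha(S_{i+1}-S_i)}\mid \F_i] \le e^{-\gamma}$ for some $\gamma=\gamma(\alpha,\delta)>0$ once $\lambda\le\delta$ and $\delta$ is small enough, i.e.\ the exponential of the sleeping count has a genuine downward drift between consecutive left-escapes, while simultaneously $m_{i+1}-m_i$ has a (conditionally) light tail of the right scale. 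Combining these via a standard supermartingale / Markov-inequality argument gives $\E[(m_{i+1}-m_i)e^{\alpha S_{i+1}}] \le C' e^{-\gamma i} $ uniformly, and summing over $i$ and over $\ell$ yields the constant $M$.

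I expect the main obstacle to be the single-step drift estimate itself — proving that $S$ does not grow between successive left-escapes. The delicate point is the entanglement between sleeping and re-awakening: adding a walk at the origin can create new sleeping walks, but every active walk that passes a sleeping site re-awakens it, and one must show the net effect is a contraction. Here I would exploit the Abelian property to choose a convenient toppling order — stabilize the newly added walk (and any it awakens) using a "fire one walk at a time and let it walk until it sleeps or exits" procedure — and then use the diffusive/gambler's-ruin estimate: a simple random walk on $\{-K,\dots,K\}$ started near $0$ has probability $\asymp 1$ of reaching a boundary before accumulating $\Theta(1)$ sleep-attempts when $K^2\lambda = \Theta(\delta^2)$ is small, and conditionally on sleeping, its sleep location has a spread of order $K$, so it lands on any given site only with probability $O(1/K)$, hence hits an existing pile-of-one sleeper rarely — but when a walk exits, whereas when it sleeps it both adds one to $S$ and, being somewhere in the bulk, gets re-awakened by the next emitted walk with probability bounded below. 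Making this trade-off rigorous and uniform in the configuration $\xi$ and in $\ell$ — in particular handling the dependence of the awakening probabilities on how many sleepers are already present — is the technical heart of §\ref{sec.single}, and the choice of $\alpha$ and $\delta$ small is dictated precisely by making the resulting drift negative.
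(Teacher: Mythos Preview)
Your strategy---index the dynamics by the left-escape times $\tau_\ell=\inf\{m:L(m)\ge\ell\}$ and show that $S(\tau_\ell)$ has negative drift---is exactly the paper's. The architecture of the final bound (control $\sum_{m=\tau_\ell}^{\tau_{\ell+1}-1}e^{\alpha S(m)}$ by $(\tau_{\ell+1}-\tau_\ell)\,e^{\alpha[S(\tau_\ell)+(\tau_{\ell+1}-\tau_\ell)]}$ and use a geometric tail for $\tau_{\ell+1}-\tau_\ell$) also matches.

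There is, however, a genuine gap in your drift step. The inequality $\E[e^{\alpha(S_{i+1}-S_i)}\mid\F_i]\le e^{-\gamma}$ cannot hold unconditionally: when $S_i=0$ there is no sleeper to reactivate, the increment $S_{i+1}-S_i$ is nonnegative, and the conditional expectation is at least $1$. Hence $\E e^{\alpha S_i}$ does \emph{not} decay like $C'e^{-\gamma i}$; it is only uniformly bounded in $i$ (which is precisely what is needed, since the statement asks for $\sup_\ell$, not a sum over $\ell$---your final ``summing over $i$ and over $\ell$'' is misstated). The paper handles the boundary at $S=0$ by a two-case recursion, and simultaneously deals with a subtlety you do not mention: adding one walk can trigger several left-exits, so possibly $L(\tau_\ell)>\ell$. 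Both issues are absorbed by working with the auxiliary function $F(\ell)=2S(\tau_\ell)+L(\tau_\ell)-\ell$ and proving $\E e^{\alpha F(\ell+1)}\le c_0+\beta\,\E e^{\alpha F(\ell)}$ with $\beta<1$, which iterates to a uniform bound. The mechanism giving the drift is also a bit cleaner than your heuristic about spread: one simply marks each new walk \emph{left}/\emph{right}/\emph{sleep}, bounds the number $Z$ of \emph{sleep}-marks before the first \emph{left}, and separately lower-bounds the probability $X$ that the very first added walk hits some sleeper (gambler's ruin gives probability $\ge p$ regardless of where the sleeper sits) and that the awakened walk then exits; the inequality $\nabla_\ell S\le Z_{\ell+1}-\1_{S(\tau_\ell)>0}X_{\ell+1}$ is what drives everything.
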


By the Abelian property, for each fixed $m$, it does not matter whether we add the $m$ walks and stabilize $\xi^m$ at once, or whether we stabilize $\xi^0$, add a walk at $x=\0$, stabilize again, and repeat this process $m$ times.
It turns out that the latter is more convenient.

In words, the above estimate means the following.
We add walks at $x=\0$ and stabilize the resulting configuration.
We repeat this indefinitely, until a certain $m$ for which $L(m)=\ell$, that is, until exactly $\ell$ walks have exited form the left.
We then compute a factor of $e^{\alpha S}$, and a new one for each new walk addition, until another walk exits from the left.
After that, $L(m) \ge \ell+1$ and no more term contributes to the sum.

In the remainder of this section we prove the above estimate.
In the next section we show how it implies Theorem~\ref{thm:quantitative}.

We first reduce the problem to the case $\xi\equiv 0$,
by considering a sequence of positions $x \in U$ where new walks will be added, instead of placing them all at $x = \0$.
Let $k$ denote the number of walks in configuration $\xi$, and label the sites $x \in U$ with $\xi(x)=1$ as $x_{-k+1}, x_{-k+2}, \dots,x_{-1},x_0$.
Also, let $\0 = x_1 = x_2 = \cdots$, and write $\boldsymbol{x} = (x_m)_{m>-k}$.
For $m \ge -k$, define $\xi^m = \delta_{x_{-k+1}} + \delta_{x_{-k+2}} + \dots + \delta_{x_{m}}$.
In words, starting from $\xi^{-k} \equiv 0$, walks are added one by one until we arrive at $\xi^0=\xi$, and new walks are added at site $x=\0$ after that.
With this construction, we can define $L(m)$ and $S(m)$ for $m \ge -k$, starting from $L(-k)=S(-k)=0$.
Now note that we can bound the left-hand side of~\eqref{eq:singleblock} by a sum over $m \ge -k$.
By re-indexing the sum in~\eqref{eq:singleblock}, it is enough to show
\begin{equation}
\label{eq:xsingleblock}
\sup_{\boldsymbol{x}} \sup_{\ell} \sum_{m\ge 0}
 \E_V^{\xi^0} [ e^{\alpha S(m)} \1_{L(m) = \ell} ] \le M,
\end{equation}
with initial configuration $\xi^0 \equiv 0$ and the supremum over $\boldsymbol{x} \in U^\N$ instead.

We now proceed to the proof of~\eqref{eq:xsingleblock}, denoting $\E^{\xi^0}_V$ by $\E$.

Given $\epsilon>0$ to be fixed later, take $\delta$ such that a sleepy walk reaches distance $4\lceil \frac{\delta}{\sqrt{\lambda}} \rceil$ before falling asleep, with probability at least $1-\epsilon$ for every $0 <\lambda \leq \delta$.

So each time we release a walk in $V$, the probability of it falling asleep before exiting $V$ is at most $\epsilon$.
Also, each time we release a walk in $U$, the probability of exiting $V$ from the left before falling asleep is at least $p=\frac{1}{4}-\epsilon$.

The key idea of the proof is to look at times when $L$ increases:
\begin{equation}\label{def-tau}
\tau_\ell=\inf\{ \, m:\ L(m)\ge \ell \, \},\quad\text{for}\quad \ell\in \N.
\end{equation}
This allows us to read the sum over all values of
the odometer, in terms of the values of $S$ around $\tau_\ell$, through the simple identity
\[
\E\Big[\sum_{m\in \N} e^{\alpha S(m)}\1_{L(m)=l}\Big]=
\E\Big[\sum_{m=\tau_\ell}^{\tau_{\ell+1}-1} e^{\alpha S(m)}\Big].
\]
We will show that $(S(\tau_\ell))_{\ell = 0,1,2,\dots}$ is typically small as it has a drift downward.

Denote by $\F_\ell$ the $\sigma$-field of the instructions revealed when stabilizing $\xi^{\tau_\ell}$. 
If $L(\tau_\ell)>\ell$, then $\tau_{\ell+1}=\tau_\ell$ and of course $S(\tau_{\ell+1})=S(\tau_{\ell})$.

Suppose that $L(\tau_\ell)=\ell$ occurs.
Then $\tau_{\ell+1}\ge \tau_\ell+1$, and it is convenient to think of each walk we add after the $ \tau_\ell$-th as coming with its own trajectory independent of $\F_\ell$.
We mark each new walk as follows:
\begin{itemize}
\item A mark \emph{left} if it exits from the left before it tries to sleep.
\item A mark \emph{right} if it exits from the right before it tries to sleep.
\item A mark \emph{sleep} if it tries to sleep before it exits.
\end{itemize}
Let $G_{\ell+1}$ denote the number of walks added until the first marked \emph{left}.
Then
\[ \tau_{\ell+1} - \tau_\ell \le G_{\ell+1}. \]
Since each new walk is marked \emph{left} with probability at least $p$,
\[G_{\ell+1} \preccurlyeq G,\]
where $G$ denotes a geometric random variable with parameter $p$ and $\preccurlyeq$ denotes stochastic domination for the \emph{conditional distribution of $G_{\ell+1}$ given $\F_\ell$}.
Also, let $Z_{\ell+1}$ denote the number of walks marked \emph{sleep} before the $G_{\ell+1}$-th one.
Then $Z_{\ell+1}$ is a sum of $G_{\ell+1}-1$ independent Bernoulli variables of parameter at most $\epsilon$, hence
\[ Z_{\ell+1} \preccurlyeq Z, \]
where $Z+1$ is geometric with parameter $\frac{p}{p+\epsilon}$.

Assuming that not only $L(\tau_\ell)=\ell$ but also $S(\tau_\ell)>0$ occur, let $X_\ell$ denote the indicator of the event that the first walk addition after $\tau_\ell$ causes a sleeping walk to be reactivated and exit $V$.
Regardless of the position of the sleeping walks, the probability that the first walk added causes one of them to reactivate is at least $p$.
Once that occurs, the probability that the reactivated walk falls asleep again before exiting $V$ is at most $\epsilon$.
Hence, we have
\[ X_{\ell+1} \succcurlyeq X, \]
where $X$ is a Bernoulli with parameter $p - \epsilon$.

Thus, using notation $\nabla_\ell f=f(\tau_{\ell+1})-f(\tau_{\ell})$, when $L(\tau_\ell)=\ell$ we have
\begin{equation}\label{eq:change}
\nabla_\ell S \le Z_{\ell+1} - \1_{S(\tau_\ell) >0} \cdot X_{\ell+1}.
\end{equation}
Still on the event that $L(\tau_\ell)=\ell$, another key relation is
\begin{equation}
\label{eq:key-leftexit}
\nabla_\ell (L+S) \le 1+Z_{\ell+1}.
\end{equation}
Indeed, the left-hand side equals the number of walks added until $\tau_{\ell+1}$, minus the total number of walks which exit from the right.
This in turn equals the number $N$ of new walks added which are not marked \emph{right}, minus the number of sleeping walks that are reactivated and exit from the right.
Finally, $N$ is bounded by the right-hand side, proving~\eqref{eq:key-leftexit}.

To control the behavior of $S(\tau_\ell)$, we will bound it by the function
\begin{equation}
\nonumber
F(\ell)= 2 \times S(\tau_\ell) + L(\tau_\ell) - \ell.
\end{equation}

When $L(\tau_\ell)=\ell$, by adding relations
\eqref{eq:change} and \eqref{eq:key-leftexit}, we have
\begin{equation}
\label{eq:drift}
F(\ell+1)-F(\ell) \le 2 Z_{\ell+1}- \1_{S(\tau_\ell) >0} \cdot X_{\ell+1}.
\end{equation}

When $L(\tau_\ell)>\ell$, we have $F(\ell+1)-F(\ell)=-1$.
To keep the computations short, we still use~\eqref{eq:drift} in this case.
Note that indeed this estimate is still valid if we set $Z_{\ell+1}=0 \preccurlyeq G$ and $X_{\ell+1}=1 \succcurlyeq X$ on this event.

Denoting $\tilde\E = \E[ \,\cdot\, | \F_\ell]$, from~\eqref{eq:drift} we get
\begin{equation}
\label{eq:always}
\tilde\E e^{\alpha F({\ell+1})}
\le
(\tilde\E e^{2 \alpha Z_{\ell+1}}) \cdot \1_{F(\ell) = 0}
+
e^{\alpha F(\ell)} (\tilde\E e^{\alpha (2Z_{\ell+1}-X_{\ell+1})}) \cdot \1_{F(\ell) > 0}
.
\end{equation}
Using Cauchy-Schwarz,
\[
(\tilde\E e^{\alpha (2Z_{\ell+1}-X_{\ell+1})})^2
\leq
\tilde\E e^{4 \alpha Z_{\ell+1}}
\times
\tilde\E e^{- 2 \alpha X_{\ell+1}}
\leq
\E e^{4 \alpha Z}
\times
\E e^{- 2 \alpha X}
=:
\beta^2.
\]
Taking expectation in~\eqref{eq:always} we get
\begin{equation}
\nonumber
\E e^{\alpha F({\ell+1})}
\le
\E e^{2 \alpha Z} + \beta \, \E e^{\alpha F(\ell)}
.
\end{equation}

Now choose $\varepsilon$ small so that $\E X > 2\E Z$, and $\alpha$ small so that $0<\beta<1$.
Iterating the previous inequality gives
\[
\E e^{\alpha F(\ell+1)}
\le
(1+\cdots+\beta^\ell)\E e^{2 \alpha Z} + \beta^\ell \, \E e^{\alpha F(0)}
.
\]
To get an estimate not depending on $\ell$, we make the crude bound
\begin{equation}
\label{eq:uniformwithtau}
\E e^{\alpha F({\ell})}
\le 1 + \tfrac{1}{1-\beta}\E e^{2 \alpha Z}
< \infty
.
\end{equation}

To conclude, we make another crude estimate
\[
\max\{ S(\tau_\ell),S(\tau_\ell+1),\dots,S(\tau_{\ell+1}) \}
\leq
S(\tau_\ell) + (\tau_{\ell+1} - \tau_\ell)
\]
Combined with~\eqref{eq:uniformwithtau}, this gives for any fixed $\ell$
\begin{align*}
\E \Big[ \sum_m e^{\alpha S(m)} \1_{L(m) = \ell} \Big]
& =
\E \Big[ \sum_{m=\tau_\ell}^{\tau_{\ell+1}-1} e^{\alpha S(m)} \Big]
\\
& \le
\E \Big[ \E \Big[
e^{\alpha [S(\tau_\ell) + (\tau_{\ell+1} - \tau_\ell)]} (\tau_{\ell+1} - \tau_\ell) \Big| \F_\ell
\Big] \Big]
\\
& \le
\E e^{\alpha S(\tau_\ell)}
\times
\E [ G e^{\alpha G}]
\\
& \le
(1 +
\tfrac{1}{1-\beta}
\E e^{2 \alpha Z})
\times
\E [ G e^{\alpha G}]
=: M.
\end{align*}
By further reducing $\alpha$, we make $\E [ G e^{\alpha G}]<\infty$.
This establishes~\eqref{eq:xsingleblock} and concludes the proof of Proposition~\ref{prop:oneblock}.

\section{Proof of exponential moment}
\label{sec:framework}

In this section we show how Theorem~\ref{thm:quantitative} follows from Proposition~\ref{prop:oneblock} and a general framework introduced in~\cite{BasuGangulyHoffman18}.
We sketch the main features of the construction, referring the reader to~\cite[§5.1]{Rolla19} for the details.

\subsubsection*{The toppling procedure}

Let $K \in 2\N$ be given.
Later on it will be chosen as in Proposition~\ref{prop:oneblock}.
We can suppose $2r+1=(K+1)n$ for some positive integer $n$.
We can also suppose that $\eta_0 \in \{0,1\}^{V_r}$, otherwise we simply topple every site containing two or more walks until there is no longer such a site,
and start from the resulting configuration.

We assign a different \emph{color} to each \emph{source}, that is a lattice site whose position is $s_i=i(K+1)-r$, for $i=1,\dots,n$.
Sites are grouped into \emph{blocks} numbered $i=1,\dots,n$, of the form $\{s_i-K,\dots,s_i+K\}$,
centered around a \emph{source}.
Each site which is not a source belongs to two blocks, and is assigned two independent stacks of instructions, one for each block.
Walks get the color of the last source they visited (initially they are assigned the color of the nearest source), and use only instructions of their own color.
We only let walks reactivate other walks if they have the same color, if they have different colors we treat the sleeping walk as if it was still sleeping.
The Abelian property implies that the configuration at the end of this procedure (which imposes a restriction on re-activation) gives an upper bound for $S(V_r)$.

\subsubsection*{Single-block dynamics}

For $m\in\N$, consider the stabilization of the configuration $\xi + m \delta_{s_i}$ inside the $i$-th block.
That is, $m$ walks are added to the source $s_i$ and the configuration is toppled until it is stable in the block.

We now define random functions denoted by
\(
L_i(\cdot)
,
\
R_i(\cdot)
\
\text{and}
\
S_i(\cdot)
,
\)
illustrated in Figure~\ref{fig:oneblock}.
Let $L_i(m)$ count the number of walks that exit the block from the left when $\xi + m \delta_{s_i}$ is stabilized in the $i$-th block, let $R_i(m)$ count the number of walks that exit the block from the right, $S_i(m)$ the number of walks sleeping in the block.

\subsubsection*{Mass balance equations and proof of active phase}

Let $m_i^*$ denote the number of times a walk arrives at the $i$-th source and acquires its color.
Writing
$\tm^* = (m_1^*,\dots,m_n^*)$,
$R_0 \equiv 0$ and $L_{n+1} \equiv 0$, the vector $\tm^*$ satisfies the \emph{mass balance equations}
\begin{equation}
\label{eq:buffer}
m_i = R_{i-1}(m_{i-1}) + L_{i+1}(m_{i+1})
\qquad
\text{ for }
i=1,\dots,n.
\end{equation}
We now rewrite the above system as
\begin{equation}
\label{eq:realizable}
L_i(m_i) = m_{i-1} - R_{i-2}(m_{i-2})
\end{equation}
for $i=1,\dots,n+1,$
where $R_{-1}\equiv 0$ and $m_{0}$ can be taken as $L_1(m_1)$.

\subsubsection*{Estimating the exponential moment}

Choose $\alpha$, $\delta$ and $M$ according to Proposition~\ref{prop:oneblock}.
Given $0 < \lambda \le \delta$, take $K = 2\lceil \frac{\delta}{\sqrt{\lambda}} \rceil$.
Also recall that $r=(n+1)K$.

For a non-negative vector $\tm$, define
\[
S(\tm) = \sum_{i=1}^n S_i(m_i)
.
\]
The total number of walks present in the blocks after global stabilization is given by
\[
S^* = S(\tm^*).
\]
Recalling that~\eqref{eq:realizable} is satisfied for $i=1,\dots,n$ when $\tm=\tm^*$, we have
\begin{align*}
\E e^{\alpha S^*} &=
\sum_{\tm} \E[ e^{\alpha S(\tm)} \I_{\tm^* = \tm}]
\\
& \le \sum_{m_0}
\E \bigg[ \sum_{m_1} \dots \sum_{m_{n}}
\prod_{i=1}^n e^{\alpha S_i(m_i)} \I_{\{L_i(m_i) = m_{i-1} - R_{i-2}(m_{i-2})\}} \bigg] .
\end{align*}
Then after taking successively conditional expectations with respect to the filtrations generated by $(L_j(\cdot),R_j(\cdot),S_j(\cdot))_{j =1,\dots k}$, for $k=n-1,\dots,1$,
one concludes that (see~\cite[§5.1]{Rolla19})
\begin{align*}
\E e^{\alpha S^*} \le
\sum_{m_0} \prod_{i=1}^n \sup_\ell \E \bigg[ \sum_{m_i}
e^{\alpha S_i(m_i)} \I_{\{L_i(m_i) = \ell\}} \bigg]
\le r M^{r/K},
\end{align*}
using Proposition~\ref{prop:oneblock} for the last inequality.
Since $S(V_r)$ is stochastically dominated by $S^*$, this concludes the proof of
Theorem~\ref{thm:quantitative}.

\section{Diffusive lower bound}
\label{sec:lower}

The lower bound in Theorem~\ref{thm:main} follows from two facts.

Let $(X_n)_{n \geq 0}$ be a simple symmetric random walk starting from $X_0=0$ and
conditioned to be positive for all $n>0$, that is, $X$ is the $h$-transform of a random walk.
Let $Z_n = \max\{X_1,\dots,X_n\}$.

\begin{lemma}[{\cite[Remark~4.3]{Rolla19}}]
Let $\NN$ be a geometric random variable
with parameter $\frac{\lambda}{1+\lambda}$ independent of $X$.
Then
\[
\zeta_c(\lambda) \ge \frac{1}{\E Z_{\NN}}.
\]
\end{lemma}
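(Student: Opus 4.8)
The goal is to relate the critical density to a one-dimensional quantity involving the maximum of a conditioned random walk stopped at a geometric time. The plan is to use the standard sufficient condition for fixation: if one can exhibit, for each finite box, a legal stabilizing toppling procedure (a ``settling'' of the walks) whose total odometer is controlled by the initial number of walks in a suitable averaged sense, then the system fixates. More precisely, I would use the approach from \cite{RollaSidoravicius12}: stabilize the walks one at a time, and for each walk follow its trajectory determined by the site-wise instructions. Because the instruction stacks are i.i.d.\ with a sleep probability $\frac{\lambda}{1+\lambda}$ at each toppling, the sequence of moves a walk makes before it first encounters a sleep instruction (that it can execute, i.e.\ lands on an empty site alone) is a simple random walk run for a geometric number $\NN$ of steps, where $\NN$ has parameter $\frac{\lambda}{1+\lambda}$.

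First I would set up the ``walk-by-walk'' stabilization on a finite interval $V$, adding walks in some order and letting each newly added active walk move until it either exits $V$ or falls asleep at a currently empty site. The key observation is that, conditioned on a walk falling asleep inside $V$, the site at which it settles is within distance $Z_{\NN}$ of where it started (and its trajectory only ever visits sites within that range), where $Z_n=\max\{X_1,\dots,X_n\}$ for the random walk conditioned to stay positive — the conditioning to stay positive (the $h$-transform) arises because, to settle at a new empty site to the right of the pile, the walk must actually move away from the occupied region, which by a reflection/last-exit decomposition is exactly the $h$-transformed walk. Then I would argue that the expected range swept, hence the expected odometer contribution per walk, is bounded by $\E Z_{\NN}$ (up to the symmetric left-hand contribution, absorbed into constants or handled by the same bound on both sides).

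The second ingredient is the quantitative criterion: if the expected per-walk settling displacement is $D$, then starting from a product measure of density $\rho$, the expected number of walks exiting $V_r$ grows sublinearly in $r$ provided $\rho < 1/D$ — roughly, a walk originating at distance more than $D r$ worth of accumulated mass from the boundary typically settles before reaching it, so $\E^\nu_{V_r} M_r = o(r)$, which by the fixation criterion quoted after the formal definitions (the $\limsup_r \frac{\E^\nu_{V_r}M_r}{r}$ dichotomy) forces $\rho \le \zeta_c$ to fail, i.e.\ gives fixation for $\rho<1/\E Z_{\NN}$ and hence $\zeta_c(\lambda)\ge 1/\E Z_{\NN}$.

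The main obstacle is making the heuristic ``a walk settles within $Z_{\NN}$ of its start, and the relevant walk is the $h$-transform'' fully rigorous in the presence of the other walks: one must handle the order of additions, the fact that a walk may reactivate sleeping walks (so the relevant object is really the outermost active walk, and one should track the rightmost/leftmost occupied site rather than individual walks), and justify the appearance of the positivity conditioning via a last-exit or cycle-lemma decomposition of the instruction stack. Since the statement is attributed to \cite[Remark~4.3]{Rolla19}, I would lean on that reference for the combinatorial bookkeeping and focus the write-up on (i) identifying the conditioned-walk maximum $Z_{\NN}$ as the correct displacement bound and (ii) feeding $D = \E Z_{\NN}$ into the sublinear-escape criterion to conclude $\zeta_c(\lambda)\ge 1/\E Z_{\NN}$.
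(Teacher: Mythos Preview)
The paper does not actually prove this lemma: it is stated with a bare citation to \cite[Remark~4.3]{Rolla19} and no argument is given in the present paper. So there is no ``paper's own proof'' to compare against beyond the pointer to the source. Your first paragraph is a fair sketch of the underlying idea from \cite{RollaSidoravicius12,Rolla19}: add walks one at a time, follow each until it either exits or finds an empty site to sleep on, and identify the increment of the occupied frontier with the maximum $Z_{\NN}$ of an $h$-transformed simple walk run for a geometric time.

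There is, however, a genuine gap in your second ingredient. You propose to show $\E^{\nu}_{V_r} M_r = o(r)$ and then invoke the criterion quoted after the formal definitions. But that criterion reads: \emph{if} $\limsup_r r^{-1}\E^{\nu}_{V_r} M_r > 0$ \emph{then} the system stays active. It is a sufficient condition for activity, not a characterization, so its failure does not imply fixation. The actual argument in \cite{RollaSidoravicius12,Rolla19} does not go through $M_r$ at all. Instead one fixes the origin and shows that, with positive probability, the odometer at $0$ is finite: by the walk-by-walk procedure, the walks initially to the right of $0$ settle on sites whose rightmost edge grows like $n\,\E Z_{\NN}$ after $n$ additions (law of large numbers), so if $\rho < 1/\E Z_{\NN}$ the $n$-th walk is typically released to the right of where it will settle and never reaches $0$; the symmetric statement handles the left side. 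A $0$--$1$ law then upgrades positive probability to almost-sure fixation. You should replace the $M_r$ step by this odometer-at-the-origin argument.
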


The lower bound in~\eqref{main-estimate} is a consequence of this and the following.

\begin{lemma}
\label{lem-hunt}

There exists a constant $C$ such that, for every $n$,
\[
\E Z_n \le C \sqrt{n}.
\]
\end{lemma}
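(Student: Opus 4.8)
The statement to prove is Lemma~\ref{lem-hunt}: if $X$ is the $h$-transform of simple random walk on $\Z$ (i.e.\ SSRW conditioned to stay positive forever) and $Z_n=\max\{X_1,\dots,X_n\}$, then $\E Z_n \le C\sqrt n$ for all $n$. The plan is to obtain a tail bound $\bP(Z_n \ge \lambda\sqrt n) \le f(\lambda)$ with $f$ summable in $\lambda$, and then integrate: $\E Z_n = \sum_{k\ge 1}\bP(Z_n\ge k) \le \sqrt n\cdot\sum_{j\ge 1}\bP(Z_n\ge j)/\sqrt n$, which one bounds by $C\sqrt n$ once the Gaussian-type tail is in hand. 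Equivalently, write $\E Z_n = \int_0^\infty \bP(Z_n \ge t)\,dt = \sqrt n\int_0^\infty \bP(Z_n\ge \lambda\sqrt n)\,d\lambda$ and bound the integral by a constant.

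\textbf{Key steps.} First I would recall that the $h$-transform here is the Doob transform of SSRW killed at $0$ by the harmonic function $h(x)=x$, so for $x,y\ge 1$ the transition probability is $p^h(x,y) = \tfrac{y}{x}\cdot p(x,y)$ where $p$ is the SSRW kernel. The process $X$ can be realized (Pitman-type / reflection) in terms of an unconditioned SSRW $S$: a standard fact is that $X$ has the same law as $2M_n - S_n + (\text{const})$-type transforms, but cleaner for us is a direct comparison. The cleanest route is: (i) by a union bound, $\bP(Z_n \ge k) \le \sum_{j\le n}\bP(X_j \ge k)$ is too lossy, so instead control the running maximum via a maximal inequality. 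Observe that $X_n$ itself is a positive supermartingale? No — under $p^h$, $\E^h[X_{n+1}-X_n\mid X_n=x] = \tfrac{1}{2x}(x+1)(1)+\tfrac{1}{2x}(x-1)(-1) = \tfrac1x$, a positive drift, so $X_n$ is a submartingale. Better: use that $1/X_n$ is a martingale (since $h$ is harmonic, $M_n := h(X_0)/h(X_n)\cdot(\text{RN derivative})$ relates laws; concretely $\E^h[1/X_{n+1}\mid X_n] = 1/X_n$ because under the $h$-transform $1/h$ plays the role of a space-time harmonic ratio). Then $(1/X_n)$ is a nonnegative martingale and Doob's maximal inequality applied to the submartingale $X_n$ — or directly to $1/X_n$ via the optional stopping theorem at the hitting time of level $k$ — gives $\bP(Z_n \ge k) = \bP(\tau_k \le n)$ where $\tau_k = \inf\{m : X_m \ge k\}$; then $1 \ge \E^h[1/X_{\tau_k\wedge n}] \ge \bP(\tau_k\le n)\cdot \tfrac1k + $ (contribution on $\{\tau_k>n\}$, which is $\ge 0$), but this only yields $\bP(\tau_k\le n)\le k\cdot\E^h[1/X_n]$, wrong direction. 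So the martingale $1/X_n$ must instead be used with the lower bound $1/X_{\tau_k} \le 1/(k-1)$ on $\{\tau_k\le n\}$... that gives $\E^h[1/X_n] = \E^h[1/X_{\tau_k\wedge n}]$, not directly a tail bound either.

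\textbf{The cleaner comparison.} I would instead couple with unconditioned SSRW started at height $1$: the $h$-transformed walk started from $0$, after its first step is at $1$, and from then on one shows $X$ is stochastically dominated by $2Y$ where $Y$ is SSRW reflected at $0$ (Tanaka), or — most directly — invoke that $Z_n$ for the conditioned walk has the same diffusive scaling as $\max$ of a $3$-dimensional-Bessel-type process, whose running maximum $M_n$ satisfies the Gaussian bound $\bP(M_n \ge \lambda\sqrt n)\le C e^{-c\lambda^2}$ by the reflection principle for SSRW combined with absolute continuity on a fixed finite horizon (the Radon–Nikodym derivative $dX/dS$ restricted to $\{$paths staying $\ge 1$ up to time $n\}$ is $X_n/X_0$ bounded issues aside, one gets $\bP^h(Z_n\ge k)\le \bP^h(X_n \ge k/2) + \bP^h(\text{down-crossing})$...). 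Rather than belabor this, the honest plan: (1) express $\bP(Z_n\ge k)$ using the explicit formula — the conditioned walk's range is governed by SSRW conditioned to stay positive up to time $n$ (not forever), and $\bP^h_n(\cdot) \le C\,\bP^{\text{SSRW}}(\cdot \mid S_j>0, j\le n)$ with harmless constant since the "forever" and "up to $n$" conditionings are mutually absolutely continuous with bounded density over horizon $n$... this needs care. (2) For SSRW conditioned to stay positive up to time $n$, the reflection principle gives $\bP(\max_{j\le n} S_j \ge k \mid S_j>0) \le C e^{-c k^2/n}$ directly from ballot-type estimates. (3) Sum: $\E Z_n = \sum_k \bP(Z_n\ge k) \le C\sum_k e^{-ck^2/n} \le C'\sqrt n$.

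\textbf{Main obstacle.} The delicate point is controlling the Radon–Nikodym derivative between "conditioned to stay positive forever" and "conditioned to stay positive up to time $n$" — or, equivalently, handling the $h$-transform tail directly without passing through that comparison. Concretely, $\bP^h(Z_n\ge k)$ should be bounded by first stepping to height $1$, then using that under $\bP^h_x$ the law up to hitting $0$ has density $h(X_{m\wedge\tau_0})/h(x)$ with respect to SSRW; on the event $Z_n\ge k$ we have $h(X_{\tau_k}) \ge k$, so $\bP^h_1(Z_n\ge k) \le \tfrac1{?}$ — getting the right power of $k$ here versus getting an extra factor $k$ is exactly the subtlety, resolved by noting that after reaching level $k$ the conditioned walk is near-certain to eventually exceed $k$, so the $h$-weight $h(X_{\tau_k})\approx k$ is the generic value not a rare maximum, yielding $\bP^h_1(Z_n\ge k) \le C\,k\cdot \bP^{\text{SSRW}}_1(\tau_k \le n \wedge \tau_0)$ and then the unconditioned hitting estimate $\bP_1(\tau_k\le n) \le C/k$ (gambler's ruin gives probability $1/k$ of reaching $k$ before $0$, times the probability it happens fast — actually $\bP_1(\tau_k < \tau_0) = 1/k$ exactly, and restricting to $\tau_k\le n$ only helps), so the two factors of $k$ cancel leaving $\bP^h(Z_n\ge k)\le C\,\bP_1(\tau_k\le n, \tau_k<\tau_0)$; finally a standard gambler's-ruin-plus-time estimate, $\bP_1(\tau_k\le n) \le C\sqrt n/k \wedge 1$ together with a Gaussian refinement $\le C e^{-ck^2/n}/k\cdot k = Ce^{-ck^2/n}$, closes the sum. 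I expect step (2)/the-cancellation-of-the-two-factors-of-$k$ to be where all the real work sits; everything after it is the routine Gaussian summation $\sum_k e^{-ck^2/n} \asymp \sqrt n$.
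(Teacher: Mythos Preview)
Your plan has a real gap at exactly the point you flag as ``where all the real work sits.'' The change-of-measure identity $\bP^h_1(Z_n\ge k)=k\,\bP_1(\tau_k\le n,\,\tau_k<\tau_0)$ is correct, but the bound you then need, namely $\bP_1(\tau_k\le n,\,\tau_k<\tau_0)\le C e^{-ck^2/n}/k$ (or the equivalent $\le C/\sqrt{n}\cdot e^{-ck^2/n}$), is \emph{not} the minimum of the two easy estimates $\bP_1(\tau_k<\tau_0)=1/k$ and $\bP_1(\tau_k\le n)\le Ce^{-ck^2/n}$. Using only the minimum gives $\bP^h(Z_n\ge k)\le \min(1,\,Ck\,e^{-ck^2/n})$, and summing this over $k$ yields a bound of order $n$, not $\sqrt n$, because $\sum_k k\,e^{-ck^2/n}\asymp n$. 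What you actually need is the \emph{product}-type bound, and for that you must do a genuine computation: e.g.\ reflect at $0$ to get $\bP_1(\tau_k\le n,\,\tau_k<\tau_0)=\bP_0(\tau_{k-1}\le n)-\bP_0(\tau_{k+1}\le n)$, then use the reflection principle for the unconstrained maximum to reduce this to a local quantity $\asymp \bP_0(S_n=k)\asymp n^{-1/2}e^{-k^2/2n}$. Only then does the sum close to $C\sqrt n$. Your phrase ``a Gaussian refinement'' hides this step entirely, and the displayed bound $\bP_1(\tau_k\le n)\le C\sqrt n/k$ is not correct as stated.

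The paper's proof avoids tail bounds altogether and is much shorter. It uses the two ingredients you already noticed but in a different way. From the martingale property of $1/X_n$ and optional stopping one gets $\bP^h_{2k}(\tau_k=\infty)=\tfrac12$, whence $\bP(X_n\ge k)\ge \tfrac12\bP(Z_n\ge 2k)$ and so $\E Z_n\le 1+4\,\E X_n$. Then the Cyclic Lemma gives the exact formula $\bP^h(X_n=x)=\tfrac{x^2}{n}\bP(S_n=x)$, hence $\E X_n=\tfrac{1}{2n}\E|S_n|^3\le C\sqrt n$. (Incidentally, once you observed that $X_n$ is a submartingale, Doob's $L^2$ maximal inequality together with the same Cyclic-Lemma formula for $\E^h X_n^2=\tfrac{1}{2n}\E S_n^4\le Cn$ would also have given $\E Z_n\le (\E Z_n^2)^{1/2}\le 2(\E X_n^2)^{1/2}\le C\sqrt n$ in one line; you abandoned that thread too early.)
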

\begin{proof}
We learned this elegant proof from Nicolas Curien.
First we claim that $\E X_n = \frac{1}{2n} \E |S_n^3|$, with $(S_n)_{n\ge 0}$ the unconditioned walk.
Indeed, recall that $X$ is an $h$-process, with $h(x) = x$. Therefore for any $x>0$,
\[
\Pb\big[ X_n=x \big] = x \, \Pb \big[S_n = x, \, S_k>0 \ \forall k=1,\dots,n \big] = \frac{x^2}{n} \Pb \big[ S_n=x \big],
\]
using the Cyclic Lemma for the second equality.
The claim follows if we multiply by $x$ and sum over $x>0$.
Second, $\E Z_n \leq 1 + 4 \E X_n$.
To prove the latter, observe that $[h(X_n)]^{-1}$ is a martingale.
Hence,
\begin{multline*}
\Pb \big[ X_n \geq k \big] \geq \Pb \big[ \tau_{2k} \leq n \text{ and } X_j > k \text{ for all } j>\tau_{2k} \big]
= \\ =
\Pb \big[ \tau_{2k} \leq n \big] \times \Pb^{2k} [ \tau_k = \infty ] = \tfrac{1}{2} \Pb \big[ Z_n \geq 2k \big],
\end{multline*}
and summing over $k$ gives the inequality.
To conclude just observe that $\E|S_n^3| \leq 3 n^{3/2}$ by computing $\E S_n^4 = 3n(n-1)\E S_1^2 + n\E S_1^4\le 3n^2$ and using Jensen's inequality.
\end{proof}

\section*{Acknowledgments}

We thank Nicolas Curien for valuable help with the $h$-process, and Perla Sousi for enlightening discussions at an early stage of this project.

\bibliographystyle{bib/leo}
\bibliography{bib/leo}


Amine Asselah
\\
Université Paris-Est, LAMA (UMR 8050), UPEC, UPEMLV, CNRS, F-94010, Cr\'eteil, France
\\
amine.asselah@u-pec.fr

Leonardo T. Rolla
\\
University of Buenos Aires and NYU-Shanghai
\\
leorolla@dm.uba.ar

Bruno Schapira
\\
Aix-Marseille Université, CNRS, Centrale Marseille, I2M, UMR 7373, 13453 Marseille, France
\\
bruno.schapira@univ-amu.fr

\end{document}